\theoremstyle{plain}
\newtheorem{theorem}{Theorem}[section]
\newtheorem*{theorem*}{Theorem}
\newtheorem{lemma}{Lemma}[section]
\theoremstyle{definition}
\newtheorem*{definition*}{Definition}
\theoremstyle{remark}
\newtheorem*{remark*}{Remark}
\numberwithin{equation}{section}
\begin{document}
\raggedbottom 

\title[Sums related to Euler's totient function]{Sums related to Euler's totient function}

\author{Artyom Radomskii}

\begin{abstract}We obtain an upper bound for the sum $\sum_{n\leq N} (a_{n}/\varphi (a_{n}))^{s}$, where $\varphi$ is Euler's totient function, $s\in \mathbb{N}$, and $a_{1},\ldots, a_{N}$ are positive integers (not necessarily distinct) with some restrictions. As applications, for any $t>0$, we obtain an upper bound for the number of $n\in [1,N]$ such that $a_{n}/ \varphi (a_{n})> t$.
\end{abstract}

 \address{HSE University, Moscow, Russian Federation}

 \email{artyom.radomskii@mail.ru}

\keywords{Euler's totient function, multiplicative functions, prime numbers.}

\maketitle

\section{Introduction}

Let $\varphi$ denote Euler's totient function. We prove

\begin{theorem}\label{T_MULTI}
Let $a_{1},\ldots, a_{N}$ be positive integers \textup{(}not necessarily distinct\textup{)}, $a_{n}\leq M$ for all $1\leq n \leq N$. For $d\in \mathbb{N}$ we set
\[
\omega(d)=\#\{n\leq N: a_{n}\equiv 0\text{ \textup{(mod $d$)}}\}.
\]Let $\alpha\in (0, 1]$, $y=\max ((\log M)^{\alpha}, 2)$, and $\mathcal{D}$ be the collection of square-free numbers, all of whose prime divisors lie in $(1,y]$ \textup{(}we note that $1\in \mathcal{D}$\textup{)}. Let
\[
\omega (n) \leq  K g(n)
\]for any $n\in \mathcal{D}$, where $K>0$ is a constant (depending on $N$) and $g(n)$ is a multiplicative function. Then
\[
\sum_{n=1}^{N} \bigg(\frac{a_{n}}{\varphi (a_{n})}\bigg)^{s}\leq K \Big(\frac{c}{\alpha}\Big)^{s} \prod_{p\leq y}
\Big(1 + ((1+p^{-1})^{s}-1)g(p)\Big)
\]for any $s\in \mathbb{N}$. Here $c > 0$ is an absolute constant.
\end{theorem}

\begin{theorem}\label{T1}

Suppose that the assumptions of Theorem \ref{T_MULTI} hold, $K = \gamma N$, $0<g(p)\leq c_{0}$ for any prime $p$, and
\[
L=\sum_{p}\frac{g(p)}{p}<\infty.
\] Then there are positive constants $C$, $c_{1}$, and $c_{2}$ depending only on $\gamma$, $L$, $c_{0}$, and $\alpha$ such that
\begin{equation}\label{T1:EQ.1}
\sum_{n=1}^{N} \bigg(\frac{a_{n}}{\varphi (a_{n})}\bigg)^{s}\leq \textup{exp} (s \log\log (s+2) + Cs)N
\end{equation}for any $s\in \mathbb{N}$, and
\begin{equation}\label{T1:EQ.2}
\#\Big\{n\leq N: \frac{a_{n}}{\varphi (a_{n})}>t\Big\}\leq c_{1} \textup{exp}(-\textup{exp}(c_{2}t)) N
\end{equation}for any $t>0$.
\end{theorem}

From Theorems \ref{T_MULTI} and \ref{T1} we obtain the following result.

\begin{theorem}\label{C2}
Let $f(n)=b_{d}n^d +\ldots + b_{0}$ be a polynomial with integer coefficients such that $b_{d}>0$, $(b_{d},\ldots, b_{0})=1$, and $f: \mathbb{N}\to \mathbb{N}$. Then there exist positive constants $C$, $c_{1}$, and $c_{2}$ depending only on $f$ such that for any $x\geq 1$, $s\in \mathbb{N}$, and $t>0$ we have
\begin{equation}\label{C2:EQ.1}
\sum_{n\leq x}\Big(\frac{f(n)}{\varphi(f(n))}\Big)^{s}\leq \textup{exp}(s\log\log (s+2) + Cs) x
\end{equation}
and
\begin{equation}\label{C2:EQ.2}
\#\Big\{n\leq x: \frac{f(n)}{\varphi (f(n))}>t\Big\} \leq c_{1} \textup{exp}(-\textup{exp}(c_{2}t)) x.
\end{equation}
\end{theorem}

 Theorem \ref{C2} extends Corollary 1.1 in \cite{Radomskii.Izv} which showed the inequality \eqref{C2:EQ.1} but with an upper bound $\textup{exp} (s \log s + Cs) x$ and without the inequality \eqref{C2:EQ.2}.

Let $\mathcal{L}=\{L_1,\ldots,L_k\}$ be a set of $k$ distinct linear functions with integer coefficients
\[
L_{i}(n)=a_i n+b_i,\qquad i=1,\ldots, k.
\]For $L(n)=an+b$, $a, b\in \mathbb{Z}$, we define
\[
\Delta_{L}=|a|\prod_{i=1}^{k}|a b_i - b a_i|.
\]We note that if $a_i = a$ for all $1 \leq i \leq k$, then
 \[
\Delta_{L}=|a|^{k+1}\prod_{i=1}^{k}|b - b_i|.
\] Modern application of the sieve methods involves the sums
\[
\sum_{(a,b)\in \Omega} \frac{\Delta_{L}}{\varphi(\Delta_{L})}
\](see, for example, \cite{Maynard}). Here $(a,b)$ denotes a vector and $\Omega$ is a finite set in $\mathbb{Z}^2$. We prove

\begin{theorem}\label{C_LIN}
Let $b_1, \ldots, b_k$ be distinct integers, $a\in \mathbb{N}$, $s\in \mathbb{N}$. Let $k\geq 2,$ $x \geq 3$, $(\log x)^{-9/10} \leq \eta \leq 1$, and $|b_i|\leq \log x$ for all $1 \leq i \leq k$. We set
\[
f(b) = (b-b_1) \dots (b-b_k).
\] Then the following statements hold.

\textup{1)} If $k\geq \log\log x$ or $s \leq k$, then
\begin{equation}\label{C_LIN:EQ.1}
\sum_{\substack{|b|\leq \eta \log x\\ b\,\neq\,b_1,\ldots, b_k}} \bigg(\frac{a^{k+1}|f(b)|}{\varphi(a^{k+1}|f(b)|)}\bigg)^{s} \leq \Big(c\,\frac{a}{\varphi (a)} \log k\Big)^{s} \eta \log x.
\end{equation}

\textup{2)} If $2 \leq k< \log\log x$ and $s > k$, then
\begin{equation}\label{C_LIN:EQ.2}
\sum_{\substack{|b|\leq \eta \log x\\ b\,\neq\,b_1,\ldots, b_k}} \bigg(\frac{a^{k+1}|f(b)|}{\varphi(a^{k+1}|f(b)|)}\bigg)^{s} \leq \Big(c\,\frac{a}{\varphi (a)} \log s\Big)^{s} \eta \log x.
\end{equation} Here $c>0$ is an absolute constant.
\end{theorem}

 Theorem \ref{C_LIN} extends a result of Maynard (\cite[Lemma 8.1]{Maynard}) which showed the same result but with $s=1$ and extends Theorem 1.4 in \cite{Radomskii.Izv} which showed the inequality \eqref{C_LIN:EQ.2} but with an upper bound
\[
\Big(c\,\frac{a}{\varphi (a)} \log k\Big)^{s} s!\, \eta \log x.
\]

From Theorem \ref{T1} we obtain
\begin{theorem}\label{T.f(p)}

Let $f(n)=b_{d}n^d +\ldots + b_{0}$ be a polynomial with integer coefficients such that $b_{d}>0$, $(b_{d},\ldots, b_{0})=1$, and $f: \mathbb{P}\to \mathbb{N}$. Then there exist positive constants $C$, $c_{1}$, and $c_{2}$ depending only on $f$ such that for any $x\geq 2$, $s\in \mathbb{N}$, and $t>0$ we have
\begin{equation}\label{T.f(p):EQ.1}
\sum_{p\leq x}\Big(\frac{f(p)}{\varphi(f(p))}\Big)^{s}\leq \textup{exp}(s\log\log (s+2) + Cs) \pi(x)
\end{equation}
and
\begin{equation}\label{T.f(p):EQ.2}
\#\Big\{p\leq x: \frac{f(p)}{\varphi (f(p))}>t\Big\} \leq c_{1} \textup{exp}(-\textup{exp}(c_{2}t)) \pi(x).
\end{equation}
\end{theorem} In particular, we can take $f(x)= x-1$ or $f(x)=x^2 + 1$ in Theorem \ref{T.f(p)}.

\section{Notation}

We reserve the letter $p$ for primes. In particular, the sum $\sum_{p\leq K}$ should be interpreted as being over all prime numbers not exceeding $K$. We denote the number of primes not exceeding $x$ by $\pi (x)$ and the number of primes not exceeding $x$ that are congruent to $a$ modulo $k$ by $\pi (x; k, a)$. Let $\# A$ denote the number of elements of a finite set $A$. We write $\mathbb{Z}$ for the set of all integers, $\mathbb{N}$ for the set of all positive integers, and $\mathbb{P}$ for the set of all primes. Let $(a_1,\ldots, a_n)$ be the greatest common divisor of integers $a_1,\ldots, a_n$ and $[a_{1},\ldots, a_{n}]$ their least common multiple. For real numbers $x,$ $y$ we also use $(x,y)$ to denote the open interval and $[x,y]$ to denote the closed interval. The usage of the notation will be clear from the context.

 Let $\varphi$ denote Euler's totient function, i.\,e.
\[
\varphi(n)=\#\{1\leq m \leq n: (m,n)=1\},\quad n\in \mathbb{N}.
\]
We write $\nu(n)$ for the number of distinct primes dividing $n$. By $\mu(n)$ we denote the M\"{o}bius function, which is defined to be $\mu(n)=(-1)^{\nu (n)}$ if $n$ is square-free, $\mu(n)=0$ otherwise. Let $P^{+}(n)$ denote the greatest prime factor of $n$ (by convention $P^{+}(1)=1$).

By definition, we put
\[
\sum_{\varnothing} = 0,\qquad \prod_{\varnothing}=1.
\]The symbol $b|a$ means that $b$ divides $a$. For fixed $a$ the sum $\sum_{b|a}$ and the product $\prod_{b|a}$ should be interpreted as being over all positive divisors of $a$. By $[x]$ we denote the largest integer not exceeding $x$ and $\{x\}= x-[x]$.

For a polynomial $f(x)$ with integer coefficients, by $\rho (f, m)$ we denote the number of solutions of the congruence $f(x) \equiv 0$ (mod $m$).

\section{Proof of Theorems \ref{T_MULTI} and \ref{T1}}

\begin{proof}[Proof of Theorem \ref{T_MULTI}.] We need the following result.

\begin{lemma}\label{L1}
 There is an absolute  positive constant $c$ such that if $\alpha\in (0,1]$ and $n$ is a positive integer, then
\[
\frac{n}{\varphi(n)}\leq \frac{c}{\alpha}\prod_{p|n:\,\,
p\leq (\log n)^{\alpha}} \biggl(1+\frac{1}{p}\biggr).
\]
\end{lemma}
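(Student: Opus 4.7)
The plan is to start from the Euler product $n/\varphi(n)=\prod_{p\mid n}(1-1/p)^{-1}$ and isolate small and large prime divisors. I would first write
\[
\frac{n}{\varphi(n)}=\prod_{p\mid n}\left(1+\frac{1}{p}\right)\cdot\prod_{p\mid n}\frac{1}{1-1/p^{2}}
\]
using the elementary identity $(1-1/p)^{-1}=(1+1/p)(1-1/p^{2})^{-1}$. The second product is bounded absolutely by $\zeta(2)$, so the task reduces to controlling $\prod_{p\mid n}(1+1/p)$ in terms of the truncated product over $p\leq y$, where $y=(\ln n)^{\alpha}$. Equivalently, I need to show
\[
\prod_{p\mid n,\,p>y}\left(1+\frac{1}{p}\right)\leq \frac{c'}{\alpha}
\]
with an absolute constant $c'$.

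The starting point for handling the large primes is the elementary counting bound: if $p_{1}<\dots<p_{k}$ are the prime divisors of $n$ exceeding $y$, then $y^{k}<p_{1}\cdots p_{k}\leq n$, hence $k<\ln n/\ln y$. Since $\ln(1+1/p)\leq 1/p$, I have
\[
\log\prod_{p\mid n,\,p>y}\left(1+\frac{1}{p}\right)\leq \sum_{p\mid n,\,p>y}\frac{1}{p}\leq \sum_{i=1}^{k}\frac{1}{q_{i}},
\]
where $q_{1}<q_{2}<\dots$ is the increasing enumeration of primes greater than $y$; this last step just uses that the sum is maximised when the chosen primes are as small as possible.

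The decisive estimate is then for $\sum_{i=1}^{k}1/q_{i}$. I would use Chebyshev-type inequalities to bound $q_{k}$: from $\pi(q_{k})=\pi(y)+k$ and $\pi(y)\ll y/\ln y$, together with $k\leq \ln n/\ln y$ and $y=(\ln n)^{\alpha}\leq \ln n$, one obtains $q_{k}\ll (\ln n)(\ln\ln n)/\ln y$. Applying Mertens' theorem gives
\[
\sum_{i=1}^{k}\frac{1}{q_{i}}=\sum_{y<p\leq q_{k}}\frac{1}{p}=\ln\ln q_{k}-\ln\ln y+O\!\left(\frac{1}{\ln y}\right),
\]
and inserting $\ln\ln q_{k}\leq \ln(2\ln\ln n)+O(1)$ and $\ln\ln y=\ln\alpha+\ln\ln\ln n$ collapses the $\ln\ln\ln n$ terms, leaving $\sum 1/q_{i}\leq -\ln\alpha+O(1)=\ln(c'/\alpha)$. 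Exponentiating and multiplying by the $\zeta(2)$ factor yields the lemma.

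The main obstacle is precisely this last step: one must track how $\ln\ln q_{k}$ and $\ln\ln y$ cancel so that what survives depends only on $\alpha$, not on $n$. I also need to handle the degenerate regime in which $y<2$ (so the product on the right is empty); there $\ln n<2^{1/\alpha}$, hence $n$ is bounded in terms of $\alpha$, and the crude estimate $n/\varphi(n)\ll \ln\ln n\leq (\ln 2)/\alpha$ (together with trivial checks for $n\in\{1,2\}$) closes the case with only a modification of the absolute constant $c$.
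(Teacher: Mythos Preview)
Your argument is correct. The identity $(1-1/p)^{-1}=(1+1/p)(1-1/p^{2})^{-1}$ reduces the problem to bounding $\prod_{p\mid n,\,p>y}(1+1/p)$, and your chain of estimates---counting prime factors above $y$ via $y^{k}\leq n$, majorising by the first $k$ primes beyond $y$, locating $q_{k}$ with Chebyshev, and cancelling the $\ln\ln\ln n$ terms via Mertens---does give $\sum_{y<p\leq q_{k}}1/p\leq \ln(1/\alpha)+O(1)$ with an absolute implied constant once one uses that $y\geq 2$ forces $\ln(1/\alpha)\leq \ln\ln\ln n+O(1)$. The degenerate regime $y<2$ is handled as you describe (with the usual proviso that very small $n$ are disposed of by an absolute bound on $n/\varphi(n)$ rather than by the asymptotic $n/\varphi(n)\ll\ln\ln n$).

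As for comparison: the paper does not actually prove this lemma but quotes it verbatim from \cite{Radomskii.Izv}, so there is no in-paper argument to compare against. Your write-up is a self-contained substitute for that citation; if anything, it is more informative here than the paper's one-line reference.
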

\begin{proof} This is \cite[Lemma 3.3]{Radomskii.Izv}.
\end{proof}

For $1\leq n \leq N$, by Lemma \ref{L1} we have
\begin{align*}
 \frac{a_n}{\varphi (a_n)}&\leq \frac{c}{\alpha} \prod_{p|a_n:\,\, p\leq (\log a_n)^{\alpha}} \biggl(1+\frac{1}{p}\biggr)\\
 &\leq \frac{c}{\alpha} \prod_{p|a_n:\,\, p\leq y} \biggl(1+\frac{1}{p}\biggr)= \frac{c}{\alpha}\sum_{\substack{d|a_n:\\P^{+}(d)\leq y}}\frac{\mu^{2}(d)}{d}.
  \end{align*} Hence
  \begin{align*}
  \sum_{n=1}^{N} \Big(\frac{a_n}{\varphi (a_n)}\Big)^{s}&\leq \Big(\frac{c}{\alpha}\Big)^{s}\sum_{n=1}^{N}
  \sum_{\substack{d_{1},\ldots, d_{s}|a_n:\\P^{+}(d_{i})\leq y}}\frac{\mu^{2}(d_1)\ldots \mu^{2}(d_s)}{d_1 \ldots d_{s}}\\
  &= \Big(\frac{c}{\alpha}\Big)^{s}
  \sum_{d_{1},\ldots, d_{s}\in \mathcal{D}}\frac{\mu^{2}(d_1)\ldots \mu^{2}(d_s)}{d_1 \ldots d_{s}}\sum_{\substack{1\leq n \leq N:\\ d_1 | a_n, \ldots,\,d_{s}|a_{n}}}1\\
  &= \Big(\frac{c}{\alpha}\Big)^{s}
  \sum_{d_{1},\ldots, d_{s}\in \mathcal{D}}\frac{\mu^{2}(d_1)\ldots \mu^{2}(d_s)}{d_1 \ldots d_{s}}\,\omega ([d_{1},\ldots, d_{s}]).
  \end{align*}We obtain
  \[
  \sum_{n=1}^{N} \Big(\frac{a_n}{\varphi (a_n)}\Big)^{s} \leq \Big(\frac{c}{\alpha}\Big)^{s}
  \sum_{n\in \mathcal{D}} \omega(n)f(n),
  \]where
  \[
  f(n)=\sum_{\substack{d_{1},\ldots, d_{s}\in \mathcal{D}:\\ [d_{1},\ldots, d_{s}]=n}}\frac{\mu^{2}(d_1)\ldots \mu^{2}(d_s)}{d_1 \ldots d_{s}}.
  \]It is easy to see that $f(n)$ is supported on $\mathcal{D}$. Suppose that $m, n \in \mathcal{D}$ and $(m,n) = 1$. For any $d_1,\ldots, d_s \in \mathcal{D}$ such that $[d_1,\ldots, d_s] = mn$ there are unique $d_i^{'}, d_{i}^{''}\in \mathcal{D},$ $i=1,\ldots, s,$ such that $d_i^{'} d_{i}^{''} = d_i$, $(d_i^{'}, d_{i}^{''})=1$, $[d_1^{'},\ldots, d_s^{'}]=n$, and $[d_1^{''},\ldots, d_s^{''}]=m$. Hence
  \[
  f(nm) = \sum_{\substack{d_{1}^{'},\ldots, d_{s}^{'}\in \mathcal{D}:\\ [d_{1}^{'},\ldots, d_{s}^{'}]=n}}\frac{\mu^{2}(d_{1}^{'})\ldots \mu^{2}(d_{s}^{'})}{d_{1}^{'} \ldots d_{s}^{'}}
  \sum_{\substack{d_{1}^{''},\ldots, d_{s}^{''}\in \mathcal{D}:\\ [d_{1}^{''},\ldots, d_{s}^{''}]=m}}\frac{\mu^{2}(d_{1}^{''})\ldots \mu^{2}(d_{s}^{''})}{d_{1}^{''} \ldots d_{s}^{''}} = f(n)f(m).
  \]
  Since $f(n)=0$ for any $n\notin \mathcal{D}$, we obtain that $f(n)$ is a multiplicative function. It is clear that
   \[
  f(p)=\sum_{k=1}^{s}\binom{s}{k}p^{-k}=(1+ p^{-1})^{s} - 1
  \] for any prime $p\leq y$.

  Since $\omega(n)\leq K g(n)$ for any $n\in \mathcal{D}$, we have
  \begin{align*}
  \sum_{n=1}^{N} \Big(\frac{a_n}{\varphi (a_n)}\Big)^{s} &\leq K \Big(\frac{c}{\alpha}\Big)^{s}
  \sum_{n\in \mathcal{D}} f(n)g(n)= K\Big(\frac{c}{\alpha}\Big)^{s}
  \prod_{p\leq y} \Big(1+f(p)g(p)\Big)\notag\\
  &= K\Big(\frac{c}{\alpha}\Big)^{s}
  \prod_{p\leq y} \Big(1+((1+p^{-1})^{s}-1)g(p)\Big).
  \end{align*} Theorem \ref{T_MULTI} is proved.
  \end{proof}

\begin{proof}[Proof of Theorem \ref{T1}.] Since $g(p)>0$ for any prime $p$, from Theorem \ref{T_MULTI} we obtain
\begin{align}
\sum_{n=1}^{N} \bigg(\frac{a_{n}}{\varphi (a_{n})}\bigg)^{s}&\leq \gamma N\Big(\frac{c}{\alpha}\Big)^{s} \prod_{p\leq y}
\Big(1 + ((1+p^{-1})^{s}-1)g(p)\Big)\notag\\
&\leq \gamma N\Big(\frac{c}{\alpha}\Big)^{s} \prod_{p}
\Big(1 + ((1+p^{-1})^{s}-1)g(p)\Big).\label{T1:basic}
\end{align}

  For any prime $p>s$, by the mean value theorem there is $\xi\in (0, p^{-1})$ such that
  \begin{equation}\label{T1.LAGRANGE}
  \Big(1+\frac{1}{p}\Big)^{s}-1 = s(1+\xi)^{s-1}\frac{1}{p}< s \Big(1+\frac{1}{s}\Big)^{s-1}\frac{1}{p}
  <\frac{e s}{p}.
  \end{equation}Since $\log (1+x)\leq x$ for any $x\geq 0$, we obtain
  \begin{align}\label{T1:p.more.s}
  \prod_{p>s}\Big(1+((1+p^{-1})^{s}-1)g(p)\Big)&\leq
  \prod_{p>s}\Big(1+\frac{e s g(p)}{p}\Big)\notag\\
  &\leq
  \textup{exp} \Big(es\sum_{p>s} \frac{g(p)}{p}\Big)\leq
  \textup{exp}(e L s).
  \end{align} If $p\leq s$, then
  \[
  1+((1+p^{-1})^{s}-1)g(p)<1+c_{0}(1+p^{-1})^{s}\leq (1+c_{0})(1+p^{-1})^{s}
  \leq (1+c_{0}) \textup{exp}\Big(\frac{s}{p}\Big).
  \] By Mertens' theorem we have
  \begin{align}
  \prod_{p\leq s}\Big(1+((1+p^{-1})^{s}-1)g(p)\Big)&\leq (1+c_{0})^{s} \textup{exp}\Big(\sum_{p\leq s}\frac{s}{p}\Big)\notag\\
  &\leq  (1+c_{0})^{s} \textup{exp}(s\log \log (s+2) + c_{3}s),\label{T1:p.less.s}
  \end{align}where $c_{3} >0$ is an absolute constant. From \eqref{T1:basic}, \eqref{T1:p.more.s}, and \eqref{T1:p.less.s} we obtain
  \[
  \sum_{n=1}^{N} \Big(\frac{a_n}{\varphi (a_n)}\Big)^{s}\leq \textup{exp}(s\log \log (s+2) + Cs) N,
  \]where $C$ is a positive constant depending only on $\gamma$, $L$, $c_0$, and $\alpha$. The inequality \eqref{T1:EQ.1} is proved.

  Now we prove the inequality \eqref{T1:EQ.2}. We have
  \begin{align*}
  \textup{exp}(s\log \log (s+2) + Cs) N &\geq \sum_{n=1}^{N} \Big(\frac{a_n}{\varphi (a_n)}\Big)^{s}\geq
  \sum_{n\leq N:\, a_{n}/\varphi(a_{n})>t} \Big(\frac{a_n}{\varphi (a_n)}\Big)^{s}\\
  &\geq t^{s} \sum_{n\leq N:\, a_{n}/\varphi(a_{n})>t} 1.
  \end{align*}We obtain
  \begin{equation}\label{T1:RASPRED}
  \#\Big\{n\leq N: \frac{a_{n}}{\varphi (a_{n})}>t\Big\}\leq \textup{exp}(s\log \log (s+2) +Cs -s\log t)N.
  \end{equation}We take
  \[
  s= [\textup{exp}(t e^{- (C+1)})] + 1.
  \] Then
  \[
  s+2 = \textup{exp}(t e^{- (C+1)}) + 3 -\theta,\text{ where }\theta:= \{ \textup{exp}(t e^{- (C+1)})\},
  \] and
  \begin{align*}
  \log (s+2) &= t e^{- (C+1)} + \log \Big(1 + \frac{3-\theta}{\textup{exp}(t e^{- (C+1)})}\Big)\\
  &= t e^{- (C+1)} + R_1,\qquad 0< R_1 \leq \frac{3}{\textup{exp}(t e^{- (C+1)})}.
  \end{align*}We get
  \begin{align*}
  \log \log (s+2) &= \log t - (C+1) + \log \Big(1+ \frac{e^{(C+1)}R_1}{t}\Big)\\
  &= \log t - (C+1) + R_2,\qquad 0< R_2 \leq \frac{3e^{C+1}}{t\,\textup{exp}(t e^{- (C+1)})}.
  \end{align*} Therefore
  \[
  s\log \log (s+2) + Cs - s\log t = -s + s R_2,
  \]where
  \[
  0< s R_2 \leq \frac{3e^{C+1} (\textup{exp}(t e^{- (C+1)}) +1)}{t\,\textup{exp}(t e^{- (C+1)})}
  \leq \frac{6 e^{C+1}}{t}.
  \] We obtain
  \begin{equation}\label{T1:ESTIMATE}
  s\log \log (s+2) + Cs - s\log t \leq -\textup{exp}(t e^{- (C+1)}) +  \frac{6 e^{C+1}}{t} \leq
  - \textup{exp} (t e^{- (C+1)}/2),
  \end{equation}if $t \geq t_{0}$ (here $t_{0}= t_0 (C) = t_{0}(\gamma,L,c_{0}, \alpha)$ is a positive constant depending only on $\gamma$, $L$, $c_0$, and $\alpha$).

   We set
  \[
  c_{2}= e^{-(C+1)}/2,\qquad c_{1}=\max(\textup{exp}(\textup{exp}(c_{2}t_{0})),1).
  \]Then $c_{1}$ and $c_{2}$ are positive constants depending only on $\gamma$, $L$, $c_0$, and $\alpha$. From \eqref{T1:RASPRED} and \eqref{T1:ESTIMATE} we obtain
  \[
  \#\Big\{n\leq N: \frac{a_{n}}{\varphi (a_{n})}>t\Big\}\leq \textup{exp} (-\textup{exp}(c_2 t)) N,
  \] if $t > t_0$. If $0< t\leq t_{0}$, then
  \begin{align*}
  \#\Big\{n\leq N: \frac{a_{n}}{\varphi (a_{n})}>t\Big\}
  &\leq N \leq N\, \frac{\textup{exp}(\textup{exp}(c_{2}t_{0}))}{\textup{exp}(\textup{exp}(c_{2}t))}\\
  &\leq c_{1}\textup{exp}(-\textup{exp}(c_{2}t)) N.
  \end{align*}We obtain
  \[
  \#\Big\{n\leq N: \frac{a_{n}}{\varphi (a_{n})}>t\Big\}\leq
  c_{1}\textup{exp}(-\textup{exp}(c_{2}t)) N
  \]for any $t>0$, and the inequality \eqref{T1:EQ.2} is proved. This completes the proof of Theorem \ref{T1}.
 \end{proof}

 \section{Proof of Theorems \ref{C2} and \ref{C_LIN}}

 \begin{proof}[Proof of Theorem \ref{C2}.] We set $a_{n}=f(n)$. It is clear that $N=[x]$. There exists a positive constant $\tau$ depending only on $f$ such that $f(n)\leq \tau n^{d}$ for any positive integer $n$. We take $M=\tau x^{d}$ and $\alpha= 1/2$. We assume that $x\geq x_{0}$, where $x_{0}>0$ is a large constant depending only on $f$. We have
 \begin{equation}\label{C2:y.EST}
 y=(\log M)^{\alpha}= (d\log x + \log \tau)^{1/2}\leq (\log x)^{3/4},
 \end{equation}if $x_{0}$ is large enough. If $n\in \mathcal{D}$, then
 \begin{equation}\label{C2:n.EST}
 n\leq \prod_{p\leq y}p \leq \textup{exp}(2y)\leq \textup{exp}(2 (\log x)^{3/4})\leq \sqrt{x}< [x] =N.
 \end{equation}We need the following result.
 \begin{lemma}\label{L2}
Let $d$ and $m$ be positive integers. Let
\[
f(x)=\sum_{i=0}^{d} b_{i} x^{i},
\]where $b_{0},\ldots, b_d$ are integers with $(b_{0},\ldots, b_d, m)=1$. Then
\[
\rho (f, m)\leq c d m^{1-1/d},
\]where $c>0$ is an absolute constant.
\end{lemma}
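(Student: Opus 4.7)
The plan is to exploit multiplicativity of $\rho$ in $m$ via the Chinese Remainder Theorem, reducing to the case $m = p^k$ of a prime power. The hypothesis $(b_0, \ldots, b_d, m) = 1$ implies $(b_0, \ldots, b_d, p) = 1$ for every prime $p \mid m$, so the polynomial $f$ remains primitive modulo each such prime. For the base case $m = p$, Lagrange's theorem in $\mathbb{F}_p[x]$ gives immediately $\rho(f, p) \leq d$, since the reduction $\bar f \in \mathbb{F}_p[x]$ is a nonzero polynomial of degree at most $d$.

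For $m = p^k$ with $k \geq 2$, I would proceed inductively via Hensel-type lifting. Each solution modulo $p^k$ lies above some $x_0$ solving $f(x) \equiv 0 \pmod{p^{k-1}}$; writing $x = x_0 + p^{k-1} t$ and expanding
\[
f(x_0 + p^{k-1} t) \equiv f(x_0) + p^{k-1} t f'(x_0) \pmod{p^k},
\]
one sees that $x_0$ admits a unique lift when $f'(x_0) \not\equiv 0 \pmod{p}$, and at most $p$ lifts otherwise. To control the ``degenerate'' residues satisfying $f'(x_0) \equiv 0 \pmod{p}$, I would analyse the $p$-adic Newton polygon of $f(x_0 + py)$, whose slopes govern the behaviour of $v_p(f(x_0 + pt))$ as $t$ varies, and whose at most $d$ segments produce the desired exponent $1 - 1/d$. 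Combining these estimates should yield a single-prime-power bound of the form $\rho(f, p^k) \leq C_1 d (p^k)^{1 - 1/d}$ with $C_1$ absolute.

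Passing back to a general $m$ by multiplicativity naively introduces a factor $d^{\omega(m)}$, which must be absorbed into $cd$. The standard resolution, due in this refined form to Stewart and Konyagin, is to separate primes $p > d$ (for which Lagrange's bound $d$ is already dominated by $p^{1 - 1/d}$ so no extra factor of $d$ is lost) from primes $p \leq d$, and to handle the latter by a sharper bookkeeping exploiting their limited combinatorial complexity. The hardest part will be the single-prime-power estimate itself: naive iterated Hensel lifting costs a factor of $p$ at each step and gives only the trivial $\rho(f, p^k) \leq d p^{k-1}$, so the genuine refinement to $(p^k)^{1 - 1/d}$ requires the Newton polygon input tied to $\deg f = d$. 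Failing a self-contained derivation, one can simply quote the bound from Stewart's or Konyagin's work on polynomial congruences.
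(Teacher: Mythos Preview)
The paper does not prove this lemma at all: its entire proof is the one-line citation ``This is \cite[Theorem~2]{Konyagin}.'' Your explicit fallback---quoting Konyagin's (or Stewart's) bound---is therefore exactly what the paper does, so in that sense your proposal matches the paper.

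The sketch you add on top (CRT reduction to prime powers, Lagrange over $\mathbb{F}_p$, Hensel lifting with Newton-polygon control, and a small-prime/large-prime split to tame the factor $d^{\omega(m)}$) is a reasonable outline of how Konyagin's argument goes, and you correctly flag the prime-power estimate and the $d^{\omega(m)}$ bookkeeping as the hard parts. One small inaccuracy: the claim that $d \le p^{1-1/d}$ whenever $p>d$ is not literally true (e.g.\ $d=2$, $p=3$ gives $3^{1/2}<2$), so the large-prime side of the dichotomy needs a slightly more careful formulation. Since you already concede that a self-contained proof may not go through and defer to the literature, this does not create a gap relative to the paper, which offers no argument beyond the citation.
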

\begin{proof}
This is \cite[Theorem 2]{Konyagin}.
\end{proof}

 By Lemma \ref{L2} we have
 \begin{align*}
 \omega (n)=\#\{k\leq N&: f(k)\equiv 0\text{ \textup{(}mod $n$\textup{)}}\}\leq \rho(f,n) \Big(\frac{N}{n}+1\Big)
 \leq 2\rho(f,n)\,\frac{N}{n}\\
 &\leq (2 c) d n^{1-1/d}\,\frac{N}{n}=\frac{\gamma N}{n^{1/d}},
 \end{align*}where $\gamma=2cd$. The function $g(n)=n^{-1/d}$ is multiplicative. Also, $g(p)=p^{-1/d}\leq  1$ for any prime $p$ (hence, we can take $c_{0}=1$) and
 \[
 L=\sum_{p}\frac{g(p)}{p}=\sum_{p}\frac{1}{p^{1+1/d}}<\infty.
 \]By Theorem \ref{T1} there exist positive constants $C$, $c_{1}$, and $c_{2}$ (depending only on $d$) such that \eqref{C2:EQ.1} and \eqref{C2:EQ.2} hold.

 Suppose that $1 \leq x < x_{0}$. For any $n\leq x$ we have
 \[
 \frac{f(n)}{\varphi(f(n))}\leq f(n)\leq \tau n^d\leq \tau x_{0}^{d}= A,
 \] where $A=A(f)>0$ is a constant depending only on $f$. Hence,
 \[
 \sum_{n\leq x} \Big(\frac{f(n)}{\varphi (f(n))}\Big)^{s}\leq A^{s} x\leq
 \textup{exp}(s\log\log (s+2) +Bs)x,
 \]where $B=\log A$ is a positive constant depending only on $f$.

 We set
 \[
 \lambda(t)=\#\Big\{n\leq x: \frac{f(n)}{\varphi(f(n))}>t\Big\}.
\] If $0< t \leq A$, then
\[
\lambda (t) \leq [x]\leq x_0 \leq x_0 \frac{\textup{exp}(\textup{exp} (c_2 A))}{\textup{exp}(\textup{exp} (c_2 t))}
= \frac{b_1}{\textup{exp}(\textup{exp} (c_2 t))}\leq \frac{b_1}{\textup{exp}(\textup{exp} (c_2 t))} x,
\] where $b_1 = x_0\,\textup{exp}(\textup{exp} (c_2 A))$ is a positive constant depending only on $f$. If $t> A$, then
\[
\lambda (t)=0 \leq \frac{b_1}{\textup{exp}(\textup{exp} (c_2 t))} x.
\]

We put
\[
\widetilde{C} = \max(C, B),\qquad \widetilde{c}_{1} = \max(c_1, b_1),\qquad \widetilde{c}_{2}= c_2.
\]Then $\widetilde{C}$, $\widetilde{c_1}$ and $\widetilde{c}_{2}$ are positive constants depending only on $f$, and for any $x\geq 1$, $s\in \mathbb{N}$, and $t>0$ we have
\[
 \sum_{n\leq x} \Big(\frac{f(n)}{\varphi (f(n))}\Big)^{s}\leq
  \textup{exp}(s\log\log (s+2) +\widetilde{C}s)x
 \]and
 \[
 \#\Big\{n\leq x: \frac{f(n)}{\varphi(f(n))}>t\Big\} \leq \frac{\widetilde{c}_1}{\textup{exp}(\textup{exp} (\widetilde{c}_2 t))} x.
 \]Theorem \ref{C2} is proved.

 \end{proof}

 \begin{proof}[Proof of Theorem \ref{C_LIN}.] We assume that $x\geq x_{0}$, where $x_{0}$ is a large absolute constant. We set
 \[
 \Omega = \{ b\in \mathbb{Z}: |b| \leq \eta \log x, b\neq b_1,\ldots, b_k\}.
 \]Since $\varphi(mn) \geq \varphi(m)\varphi(n)$ for all positive integers $m$ and $n$, and $a^{k+1}/\varphi(a^{k+1})=a/\varphi(a)$, we obtain
 \begin{equation}\label{C2.GENERAL}
 S= \sum_{b\in \Omega} \bigg(\frac{a^{k+1}|f(b)|}{\varphi(a^{k+1}|f(b)|)}\bigg)^{s}
 \leq \Big(\frac{a}{\varphi(a)}\Big)^{s} \sum_{b\in \Omega} \bigg(\frac{|f(b)|}{\varphi(|f(b)|)}\bigg)^{s}.
 \end{equation}

  Let $b\in \Omega$. Hence, $|b-b_i| \leq 2 \log x$ for all $1\leq i \leq k$, and therefore $|f(b)| \leq (2\log x)^{k}$. Suppose that $k\geq \log\log x$.  Since $n/\varphi (n) \leq c \log\log (n+2)$ for any positive integer $n$, where $c$ is an absolute positive constant, we obtain
  \[
  \frac{|f(b)|}{\varphi (|f(b)|)} \leq c (\log k + \log\log (4\log x))\leq c_1 \log k.
  \] Hence,
  \[
  S \leq \Big(\frac{a}{\varphi(a)}\,c_1 \log k\Big)^{s}\# \Omega \leq
  \Big(c_2\,\frac{a}{\varphi(a)} \log k\Big)^{s} \eta \log x,
  \]where $c_2$ is an absolute positive constant.

  Suppose that $2 \leq k < \log\log x$. We take $\alpha =1/4$, $M = (2\log x)^{k}$. Then
  \[
  y= (\log M)^{\alpha} \leq 2 (\log\log x)^{1/2}.
  \] If $n\in \mathcal{D}$, then
 \[
 n\leq \prod_{p\leq y}p \leq \textup{exp}(2y)\leq \textup{exp}(4 (\log\log x)^{1/2})\leq \eta \log x.
 \]For $n\in \mathcal{D}$, we have
 \begin{align*}
 \omega (n) = \#\{b\in \Omega: f(b) \equiv 0 \text{ (mod $n$)}\}&\leq \Big(\frac{2 \eta \log x}{n} + 1\Big)\prod_{p|n} \min (p,k)\\
 &\leq \frac{3 \eta \log x}{n}\prod_{p|n} \min (p,k).
 \end{align*} We set $g(1)=1,$
 \[
 g(p^{\beta}) = \begin{cases}
 \min(p,k)/p, &\text{if $\beta=1$;}\\
 1,                   &\text{if $\beta \geq 2$};
                \end{cases}
 \] and $g(p_{1}^{\beta_1} \dots p_{r}^{\beta_{r}}) = g(p_{1}^{\beta_1})\dots g(p_{r}^{\beta_r})$, where $p_1, \ldots, p_r$ are pairwise distinct prime numbers. Then $g(n)$ is a multiplicative function and
 \[
 \omega(n)\leq (3\eta \log x) g(n)
 \] for any $n\in \mathcal{D}$. Since $g(p)>0$ for any prime $p$, from Theorem \ref{T_MULTI} we obtain
 \begin{align}
 \sum_{b\in \Omega} \bigg(\frac{|f(b)|}{\varphi(|f(b)|)}\bigg)^{s}\leq c^{s}(\eta \log x)
 \prod_{p\leq y}
\Big(1 + ((1+p^{-1})^{s}-1)g(p)\Big)\notag\\
\leq c^{s}(\eta \log x)
 \prod_{p}
\Big(1 + ((1+p^{-1})^{s}-1)g(p)\Big)\label{C2.BASIC.SUM}
 \end{align} (here $c>0$ is an absolute constant). By \eqref{C2.GENERAL} and \eqref{C2.BASIC.SUM} we have
 \[
 S \leq \Big(c\,\frac{a}{\varphi (a)}\Big)^{s} \eta \log x \prod_{p}
\Big(1 + ((1+p^{-1})^{s}-1)g(p)\Big).
 \]

 Suppose that $s \leq k$ (we recall that $2 \leq k < \log\log x$). If $p> k$, then $p>s$ and \eqref{T1.LAGRANGE} holds. Since $g(p) = k/p$, we obtain
 \[
 \prod_{p > k}
\Big(1 + ((1+p^{-1})^{s}-1)g(p)\Big)\leq \prod_{p>k}
\Big(1 + \frac{e k s}{p^{2}}\Big) \leq c_{1}^{s},
 \] where $c_1 >0$ is an absolute constant. If $p\leq k$, then $g(p) =1$ and we have
 \[
 1 + ((1+p^{-1})^{s}-1)g(p) = (1+p^{-1})^{s}\leq \textup{exp}\Big(\frac{s}{p}\Big).
 \] Hence
 \[
 \prod_{p \leq k}
\Big(1 + ((1+p^{-1})^{s}-1)g(p)\Big)\leq \textup{exp}\Big(\sum_{p \leq k} \frac{s}{p}\Big)\leq (c_2 \log k)^{s},
 \]where $c_2>0$ is an absolute constant. The inequality \eqref{C_LIN:EQ.1} is proved.

 Suppose that $s> k$ and $2 \leq k < \log\log x$. Since $g(p) \leq k/p < s/p$ for any prime $p$, we obtain
 \[
 \prod_{p > s}
\Big(1 + ((1+p^{-1})^{s}-1)g(p)\Big)\leq \prod_{p>s}
\Big(1 + \frac{e s^{2}}{p^{2}}\Big) \leq c_{3}^{s}.
 \] Since $g(p) \leq 1$ for any prime $p$, we have
\[
 \prod_{p \leq s}
\Big(1 + ((1+p^{-1})^{s}-1)g(p)\Big)\leq \textup{exp}\Big(\sum_{p \leq s} \frac{s}{p}\Big)\leq (c_4 \log s)^{s}.
 \]The inequality \eqref{C_LIN:EQ.2} is proved.

 Suppose that $3\leq x < x_0$. For any $b\in \Omega$ we have $|f(b)|\leq (2\log x)^{k} \leq A_{0}^{k}$, where $A_0 = 2 \log x_0$, and
 \[
 \frac{|f(b)|}{\varphi(|f(b)|)}\leq c \log\log (|f(b)|+2)\leq A_{1} \log k,
 \]where $A_{1}>0$ is an absolute constant. Since
 \[
 \#\Omega\leq 2\eta \log x + 1 \leq 2\log x_{0}+1
 \]and
 \[
 \eta \log x \geq (\log x)^{1/10}\geq (\log 3)^{1/10}>1,
 \] by \eqref{C2.GENERAL} we have
 \[
 S \leq \Big(A_{1} \frac{a}{\varphi (a)}\log k\Big)^{s}\#\Omega
 \leq \Big(A_{2} \frac{a}{\varphi (a)}\log k\Big)^{s}\eta \log x,
 \]where $A_2 >0$ is an absolute constant. This completes the proof of Theorem \ref{C_LIN}.

 \end{proof}

 \section{Proof of Theorem \ref{T.f(p)}}

 \begin{proof} We set $a_{p}=f(p)$. It is clear that $N=\pi (x)$. There exists a positive constant $\tau$ depending only on $f$ such that $f(p)\leq \tau p^{d}$ for any prime $p$. We take $M=\tau x^{d}$ and $\alpha= 1/2$. We assume that $x\geq x_{0}$, where $x_{0}>0$ is a large constant depending only on $f$. By \eqref{C2:y.EST} and \eqref{C2:n.EST} we have $y= (\log M)^{\alpha}\leq (\log x)^{3/4}$ and $n \leq \sqrt{x}$ for any $n\in \mathcal{D}$.

 We need the following result.

 \begin{lemma}[The Brun - Titchmarsh inequality]\label{L:Brun.T}
 If $1\leq k < x$ and $(a,k) = 1$, then
 \[
 \pi (x; k, a) < \frac{ 3 x}{\varphi (k) \log (x/k)}.
 \]
  \end{lemma}
  \begin{proof}
  This is \cite[Theorem 3.8]{Halberstam.Richert}.
  \end{proof}

 Fix $n\in \mathcal{D}$, and let $\alpha_{1}, \ldots, \alpha_{l}$ be all solutions of the congruence $f(x) \equiv 0$ (mod $n$). We have
 \[
 l=\rho (f, n) = \prod_{p|n} \rho (f,p)= \prod_{\substack{p|n\\ p\nmid b_{d}}} \rho (f,p)\cdot
 \prod_{\substack{p|n\\ p| b_{d}}} \rho (f,p) = A\cdot B.
 \]

 For any prime $p\nmid b_{d}$, by Lagrange's theorem we have $\rho (f, p) \leq \min (p,d)$. Trivially $\rho (f, p) \leq p$ for any prime $p$. Hence,
 \[
 A \leq \prod_{\substack{p|n\\ p\nmid b_{d}}} \min (p, d) \leq \prod_{p|n} \min (p, d),\qquad
 B \leq \prod_{\substack{p|n\\ p| b_{d}}} p \leq \prod_{p|b_{d}} p = c(f).
 \] We obtain
 \begin{equation}\label{T.f(p):l.EST}
 l\leq c(f) \prod_{p|n} \min (p, d),
 \end{equation}where $c(f)>0$ is a constant depending only on $f$.

 We have
 \begin{equation}\label{T.f(p):omega.EST}
 \omega (n) = \#\{p\leq x: f(p)\equiv 0\text{ (mod $n$)}\}= \sum_{i=1}^{l} \pi (x; n, \alpha_{i}).
 \end{equation}Fix $i\in \{1,\ldots, l\}$ and suppose that $(\alpha_i, n)=1$. We have
 \[
 \log \Big(\frac{x}{n}\Big)\geq \log(\sqrt{x})= \frac{1}{2}\log x.
 \]Applying Lemma \ref{L:Brun.T}, we obtain
 \begin{equation}\label{T.f(p):prime.progr.EST}
 \pi (x; n, \alpha_i)\leq \frac{6x}{\varphi (n) \log x}.
 \end{equation}

 Suppose that $(\alpha_i, n)> 1$. Hence,
 \[
 \pi (x; n, \alpha_i)\leq 1 \leq \frac{6x}{\varphi (n) \log x},
 \]since
 \[
 \frac{6x}{\varphi (n) \log x} \geq \frac{6x}{n \log x}\geq \frac{6\sqrt{x}}{\log x}> 1,
 \]if $x_0$ is large enough. We see that the inequality \eqref{T.f(p):prime.progr.EST} holds in both cases $(\alpha_i, n)=1$ and $(\alpha_i, n)>1$. From \eqref{T.f(p):l.EST} -- \eqref{T.f(p):prime.progr.EST} we get
 \[
 \omega (n) \leq \frac{6x l}{\varphi (n) \log x} \leq \frac{\gamma \pi(x)}{\varphi(n)} \prod_{p|n} \min (p,d),
 \]where $\gamma= \gamma (f)>0$ is a constant depending only on $f$.

 We set $g(1)=1,$
 \[
 g(p^{\beta}) = \begin{cases}
 \min(p,d)/(p-1), &\text{if $\beta=1$;}\\
 1,                   &\text{if $\beta \geq 2$};
                \end{cases}
 \] and $g(p_{1}^{\beta_1} \dots p_{r}^{\beta_{r}}) = g(p_{1}^{\beta_1})\dots g(p_{r}^{\beta_r})$, where $p_1, \ldots, p_r$ are pairwise distinct prime numbers. Then $g(n)$ is a multiplicative function and
 \[
 \omega(n)\leq \gamma \pi(x)g(n)
 \] for any $n\in \mathcal{D}$.

 Since
 \[
 g(p) = \frac{\min (p,d)}{p-1}\leq \frac{p}{p-1}\leq 2
 \] for any prime $p$ and
 \[
 L=\sum_{p}\frac{g(p)}{p}= \sum_{p\leq d}\frac{1}{p-1} + \sum_{p>d}\frac{d}{p(p-1)}<\infty,
 \]by Theorem \ref{T1} there exist positive constants $C$, $c_1$ and $c_2$ depending only on $f$ such that the inequalities \eqref{T.f(p):EQ.1} and \eqref{T.f(p):EQ.2} hold.

 By arguing as in the end of the proof of Theorem \ref{C2}, we find that there exist positive constants $\widetilde{C}$, $\widetilde{c}_1$, and $\widetilde{c}_2$ depending only on $f$ such that for any $x\geq 2$, $s\in \mathbb{N}$, and $t>0$ we have
\[
 \sum_{p\leq x} \Big(\frac{f(p)}{\varphi (f(p))}\Big)^{s}\leq
  \textup{exp}(s\log\log (s+2) +\widetilde{C}s)\pi(x)
 \]and
 \[
 \#\Big\{p\leq x: \frac{f(p)}{\varphi(f(p))}>t\Big\} \leq \frac{\widetilde{c}_1}{\textup{exp}(\textup{exp} (\widetilde{c}_2 t))} \pi(x).
 \]Theorem \ref{T.f(p)} is proved.

 \end{proof}

\section{Acknowledgements}
The author is grateful to Mikhail R. Gabdullin for useful discussions and to the anonymous referee for useful comments.

This research was supported by Russian Science Foundation, grant 20-11-20203, https://rscf.ru/en/project/20-11-20203/.


\begin{thebibliography}{99}

  \bibitem{Halberstam.Richert}
  H.~Halberstam and H.\,E.~Richert. \emph{Sieve methods.} L.M.S. monographs. Academic Press, 1974.

  \bibitem{Konyagin}
 S.\,V.~Konyagin. On the number of solutions of an $n$th degree congruence with one unknown. \emph{Sb. Math.}, \textbf{37} (1980), no. 2, 151--166.

 \bibitem{Maynard}
  J.~Maynard. Dense clusters of primes in subsets. \emph{Compos. Math.}, \textbf{152} (2016), no. 7, 1517--1554.

  \bibitem{Radomskii.Izv}
  A.\,O.~Radomskii. On Romanoff's theorem. \emph{Izv. Math.}, \textbf{87} (2023), no. 1, 113-153.

















  \end{thebibliography}
\end{document}